\documentclass[reqno]{amsart}
\usepackage{amsmath,amssymb}
\usepackage{amsfonts,amsthm}
\usepackage[utf8]{inputenc}

\usepackage{enumitem}
\usepackage{todonotes,xcolor}
\usepackage{graphicx}

\usepackage[foot]{amsaddr}
\definecolor{dkblue}{RGB}{1,31,91}
\theoremstyle{definition}
\newtheorem{theorem}{Theorem}

\newtheorem{lemma}[theorem]{Lemma}

\newtheorem{remark}[theorem]{Remark}

\usepackage[colorlinks=true, linkcolor=dkblue, citecolor=dkblue, urlcolor=dkblue]{hyperref} 


\newcommand{\C}{\mathbb{C}}

\newcommand{\R}{\mathbb{R}}

\newcommand{\norm}[2][]{\left\|#2\right\|_{#1}}

\newcommand{\Lpnorm}[2][]{\ifthenelse{\equal{#1}{}}{\norm{#2}_{L^p}}{\norm{#2}_{L^p(#1)}}}
\newcommand{\Hknorm}[2][]{\ifthenelse{\equal{#1}{}}{\norm{#2}_{H^k}}{\norm{#2}_{H^k(#1)}}}

\newcommand{\QV}[2][]{\ifthenelse{\equal{#1}{}}{\langle #1 \rangle}{\langle #1,#2 \rangle}}

\newcommand{\ind}{\mathbb{I}}


\begin{document}

\keywords{Boltzmann Equation, Non Angular Cut-off, Collisional Cross-section, Nonlocal Fractional Diffusion}
\subjclass[2020]{Primary 35Q20, 35R11, 76P05, 82C40, 26A33. }

\title[Vanishing angular singularity limit]{Vanishing angular singularity limit to the hard-sphere Boltzmann equation}

\author[J. W. Jang]{Jin Woo Jang}
\address{Department of Mathematics, Pohang University of Science and Technology (POSTECH), Pohang, South Korea (37673). \href{mailto:jangjw@postech.ac.kr}{jangjw@postech.ac.kr} }

\author[B. Kepka]{Bernhard Kepka}
\address{Institute for Applied Mathematics, University of Bonn, 53115 Bonn, Germany. \href{mailto:kepka@iam.uni-bonn.de}{kepka@iam.uni-bonn.de} }

\author[A. Nota]{Alessia Nota}
\address{Department of Information Engineering, Computer Science and Mathematics at Università degli Studi dell'Aquila (UnivAq), 67100  L'Aquila, Italy. \href{mailto:alessia.nota@univaq.it}{alessia.nota@univaq.it} }

\author[J. J. L. Vel\'{a}zquez]{Juan J. L. Vel\'{a}zquez}
\address{Institute for Applied Mathematics, University of Bonn, 53115 Bonn, Germany. \href{mailto:velazquez@iam.uni-bonn.de}{velazquez@iam.uni-bonn.de} }

\begin{abstract}
	In this note we study Boltzmann's collision kernel for inverse power law interactions $U_s(r)=1/r^{s-1}$ for $s>2$ in dimension $ d=3 $. We prove the limit of the non-cutoff kernel to the hard-sphere kernel and give precise asymptotic formulas of the singular layer near $\theta\simeq 0$ in the limit $ s\to \infty $. Consequently, we show that solutions to the homogeneous Boltzmann equation converge to the respective solutions.
\end{abstract}

\thispagestyle{empty}

\maketitle
\tableofcontents
\newpage

\section{Introduction}
The Boltzmann equation reads as
\begin{equation}\label{boltzmann eq}
    \partial_t f +v\cdot \nabla_x f = Q(f,f)(v),
\end{equation}where $f=f(t,x,v)$ is the velocity distribution of particles with position $x\in \Omega \subset \mathbb{R}^3$ and velocity $v\in\mathbb{R}^3$ at time $t\in[0,\infty)$.

The equation has been considered as a fundamental model for the collisional gases that interact either under the hard-sphere potential  $U_s(r)=\infty $ for $r\le 2\epsilon$ and $=0$ for $r\ge 2\epsilon$, or  under the long-range potential $U_s(r)\simeq\frac{1}{r^{s-1}}$ for $s>2$. Here $\epsilon$ is the radius of each hard-sphere.   The prototype of the model was suggested by Maxwell \cite{Maxwell1,Maxwell2} and Boltzmann \cite{Boltzmann1970}. 

In this note we consider the particular case of inverse power law interactions $ U_s(r)= 1/r^{s-1} $ leading to non-cutoff kernels (cf. formula \eqref{eq:CollisionKernel})
\begin{align*}
	B_s(|v-v_*|,\cos\theta) = |v-v_*|^{\gamma}b_s(\cos\theta),\quad \gamma=\dfrac{s-5}{s-1}.
\end{align*}
Here, $ b_s $ is the so-called angular part. We prove that the function $ B_s $ converges to the hard-sphere kernel in the limit  $ s\to \infty $. We give a precise study of the singularity as $ \theta\to 0 $ when $ s\to \infty $. 
Finally, we show that solutions to the homogeneous Boltzmann equation with collision kernel $ B_s $ converge to the solution to the equation for hard-spheres. Such a limit result was suggested to exist in \cite[Remark 1.0.1]{GallagherSaintRaymond2013NewtoToBoltzmann}.

\subsection{Boltzmann collision operator}
The Boltzmann collision operator $Q$ takes the form
\begin{align*}
		Q(f,f)(v)= \int_{\R^3}\int_{S^2} B(|v-v_*|,n\cdot \sigma) (f'f'_*-ff_*)\, d\sigma dv_*, \quad n:= \dfrac{v-v_*}{|v-v_*|},
\end{align*}
where we used the standard notation $ f'=f(v'), \, f'_*= f(v'_*),\, f_*=f(v_*) $. Also $ (v',v'_*) $ are the post-collisional velocities and $ (v,v_*) $ the pre-collisional velocities. The function $ B $ is Boltzmann's collision kernel and strongly depends on the microscopic interaction of two particles in the course of a collision. It only depends on the length of relative velocities $ |v-v_*| $ and the so-called deviation angle $ \theta \in [0,\pi] $ through $ n\cdot \sigma = \cos\theta $.

It is customary to distinguish two main classes of kernels, namely angular cutoff and non-cutoff kernels. This refers to a possible singularity of the kernel when $ \theta\to 0 $. Such deviation angles correspond to grazing collisions, i.e. collisions such that $ v\approx v' $. They appear only for long-range or weak interactions.

\subsection{Derivation of Boltzmann's collision kernel for long-range interactions}
Let us give here a derivation of the collision kernel for inverse power law interactions. We consider the collision of two particles $ (x,v) $, $ (x_*,v_*) $ with equal mass $ m=1 $. Due to conservation of momentum and conservation of energy, both $ v_c=(v+v_*)/2 $ and $ |v-v_*| $ are conserved. Here, $ v_c $ is the velocity of the center of mass $ x_c=(x+x_*)/2 $. It is convenient to use the coordinate system $ (\bar{x}, \bar{v})=(x-x_*,v-v_*) $, in which the center of mass is zero and at rest. In this coordinate system, the velocities after the collisions have equal lengths but opposite directions due to the conservation of momentum and energy. Hence, they are given by $ |\bar{v}|\sigma/2 $ and  $ -|\bar{v}|\sigma/2 $, respectively, for $ \sigma\in S^2 $. In the original coordinate system, we thus get
\begin{align*}
	v'= \dfrac{v+v_*}{2}+\dfrac{|v-v_*|}{2}\sigma, \quad v'_*= \dfrac{v+v_*}{2}-\dfrac{|v-v_*|}{2}\sigma.
\end{align*}

In order to derive the distribution of $ \sigma $ in the scattering problem, we need to consider the interaction of both particles via the potential $ U $. As is well-known we can reduce it to a single particle problem in the center of mass coordinate system $ (\bar{x}, \bar{v}) $ with (reduced) mass $ \mu=1/2 $, see e.g. \cite[Section 13]{Landau}. The motion is planar and we can use polar coordinates. The Hamiltonian reads,
\begin{align*}
	H(r,\varphi,\dot{r}, \dot{\varphi}) = \dfrac{\mu}{2}\left( \dot{r}^2+r^2\dot{\varphi}^2\right) + U(r).
\end{align*}
Both energy $ E= H(r,\varphi,\dot{r}, \dot{\varphi}) $ and angular momentum $ L=\mu r^2\dot{\varphi}  $ are conserved. 

For the collision process we consider the particle $ (\bar{x},\bar{v})(t) $ passing the center of the potential with asymptotic velocity $ v-v_* $ as $ t\to -\infty $, $ r\to \infty $. The particle is scattered and moves away from the center with asymptotic velocity $ v'-v'_*$ as $ t\to \infty $, $ r\to \infty $. The turning point ($ \dot{r}=0 $) is given at distance $ r_m $, which is the largest root of
\begin{align*}
		E - \dfrac{L^2}{r_m^2} - U(r_m) =0.
\end{align*}
We can determine $ E $ and $ L $ by considering the asymptotic value $ t\to -\infty $. This yields $$ E= \frac{|v-v_*|^2}{4} \text{ and } L=\mu|\bar{x}\times \bar{v}| =\frac{ |\bar{x}||\bar{v}|\sin (\psi) }{2} = \frac{|v-v_*| \rho }{2} ,$$ where $ \psi $ is the angle between $ \bar{x} $ and $ \bar{v} $. Furthermore, $ \rho $ is the impact parameter, which is the distance of the closest approach if the particle is passing the center without the presence of an interaction, see Figure \ref{fig:scattering}. The formula for $ L $ can be obtained by a geometric argument.

\begin{figure}[h]
	\centering
	\includegraphics[width=0.8\linewidth]{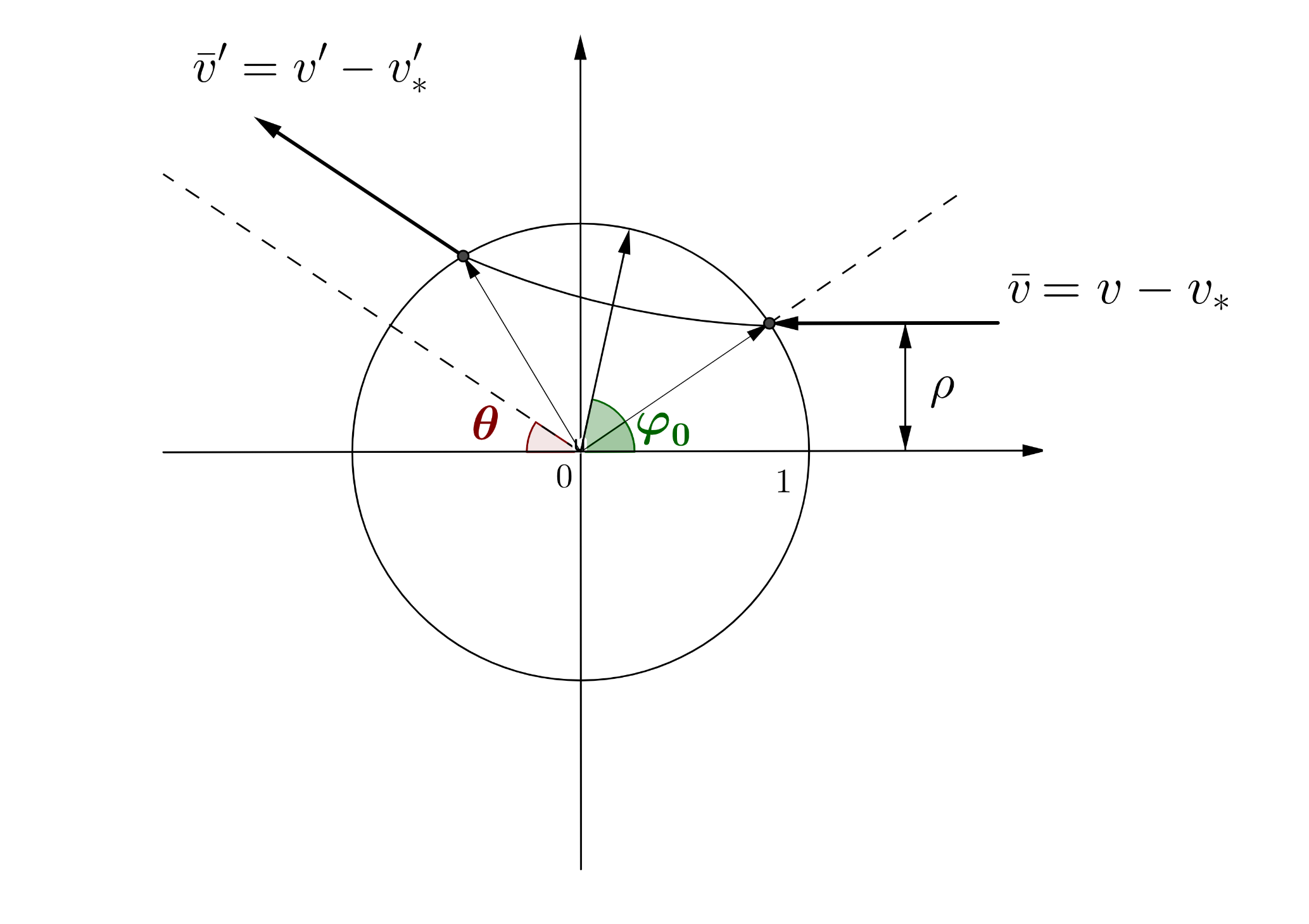}
	\caption{Two-body scattering process: $ \rho $ is the impact parameter, $ \theta $ the deviation angle and $ \varphi_0 $ the angle of the axis of symmetry.}
	\label{fig:scattering}
\end{figure}

The solution to the above problem is implicitly given by, see e.g. \cite[Section 14]{Landau},
\begin{align*}
		\varphi = \text{const.}+ \int_{r_m}^r \dfrac{L/r_*^2\, dr_*}{\sqrt{E-U(r_*)-\frac{L^2}{r_*^2}}}, \quad t = \text{const.}+\int_{r_m}^r \dfrac{dr_*}{2\sqrt{E-U(r_*)-\frac{L^2}{r_*^2}}}.
\end{align*}
In the limit $ t\to-\infty $ the angle $ \varphi $ is zero. By a symmetry argument, one can see that the angle $ \varphi_0 $ of the line through the center and the point of closest approach is given by (see Figure \ref{fig:scattering})
\begin{align*}
	\varphi_0 = \int_{r_m}^\infty \dfrac{L/r_*^2\, dr_*}{\sqrt{E-U(r_*)-\frac{L^2}{r_*^2}}}.
\end{align*}
Now, we plug in the values for $ E, \, L $ and use the change of variables $ y=\rho/r_* $. Furthermore, we use $ U(r)=r^{-(s-1)} $ and define $ \beta = \rho (|v-v_*|/2)^{2/(s-1)} $ to get, cf. \cite[page 69-71]{Cercignani},
\begin{align}\label{eq:DefIntegral}
		\varphi_0 = \int_{0}^{x_0}\dfrac{dy}{\sqrt{1-y^2-(y/\beta)^{s-1}}}, \quad x_0 = \rho/r_m.
\end{align}
The deviation angle is given by $ \theta = \pi-2\varphi_0 $ for a given impact parameter $ \rho $.

The number of particles scattered with deviation angle close to $ \theta $ is proportional to $ |v-v_*| $ and the corresponding cross-section, that is $ 2\pi\rho d\rho= 2\pi \rho(\theta)|\rho'(\theta)|\, d\theta  $. Changing to the variable $ \beta $ and integrating via the solid angle yields the formula
\begin{align}\label{eq:CollisionKernel}
	B_s(|v-v_*|,\cos\theta)\,  d\sigma =  2^{\frac{4}{s-1}}|v-v_*|^{\frac{s-5}{s-1}}\dfrac{\beta(\theta)}{\sin \theta}|\beta'(\theta)| \, d\sigma.
\end{align}
This completes the formal derivation of the Boltzmann collision operator for the long-range interactions.

\subsection{Outline of the article}
We now provide a brief outline of the rest of the article. In Section \ref{sec.kernel}, we give a proof of the limit of the non-cutoff kernel to the hard-sphere kernel as $s\to \infty$. Then in Section \ref{sec.asymp}, we study the asymptotics of the singular layer near $\theta \simeq 0$ as $s\to \infty$. Finally, in Section \ref{sec:HomogBE}, we prove the convergence of the solution to the spatially homogeneous Boltzmann equation without angular cutoff to the solution to the hard-sphere Boltzmann equation as $s\to \infty$.

\section{Limit of the non-cutoff collision kernel}\label{sec.kernel} 
In this section, we study the limit of the kernel \eqref{eq:CollisionKernel} as $s\to \infty$.
Our first result contains the limit of the kernel as $s\to \infty$ as well as some uniform estimates. 
These estimates together with the ones in Section \ref{sec.asymp} play a crucial role for the proof of the rigorous limit of a weak solution to the spatially homogeneous Boltzmann equation without angular cutoff to the one for the hard-sphere interaction, see Section \ref{sec:HomogBE}. 

\begin{theorem}\label{thm:Kernel}
	Let us define the angular part of the collision kernel via $$ b_s(\cos\theta)=2^{4/(s-1)}\frac{\beta(\theta)}{\sin \theta}\beta'(\theta),\  s\geq2 .$$
	\begin{enumerate}[label=(\roman*)]
		\item We have as $ s\to\infty $
		\begin{align*}
				b_s(\cos\theta) \to \dfrac{1}{4}
		\end{align*}
	locally uniformly for $ \theta \in (0,\pi] $.
	\item The following asymptotics holds
	\begin{align*}
			\lim_{\theta\to 0} \theta^{1+2/(s-1)} \, b_s(\cos\theta) \sin\theta = C_s, 
			\quad
			C_s := \dfrac{2^{4/(s-1)}}{s-1} \left(\dfrac{\sqrt{\pi} \Gamma\left(\frac{s}{2}\right)}{\Gamma\left(\frac{s-1}{2}\right)}\right) ^{2/(s-1)}.
	\end{align*}
	\item Finally, we have the uniform bound
	\begin{align*}
			\sup_{s\geq 3}\, \sup_{\theta\in(0,\pi]} \theta^{1+2/(s-1)} \, b_s(\cos\theta) \sin\theta <\infty.
	\end{align*}
	\end{enumerate}
\end{theorem}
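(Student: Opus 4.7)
The plan is to study $\beta$ as a function of $\theta$ via the implicit relation $\theta(\beta) = \pi - 2\varphi_0(\beta)$ coming from \eqref{eq:DefIntegral}. A preliminary step is to verify that $\varphi_0 : (0,\infty) \to (0,\pi/2)$ is a smooth strictly increasing bijection, so that $\beta(\theta)$ is smooth and strictly decreasing on $(0,\pi]$ with $\beta(0^+) = \infty$, $\beta(\pi) = 0$, and $b_s = 2^{4/(s-1)}\beta|\beta'(\theta)|/\sin\theta$ is well defined. The three claims follow from a fine study of this map in two asymptotic regimes: $s \to \infty$ for (i), $\theta \to 0^+$ for (ii), and a combination of both for (iii).

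For (i) I would pass to the limit $s \to \infty$ inside \eqref{eq:DefIntegral}. Pointwise $(y/\beta)^{s-1} \to 0$ for $y < \beta$ and $+\infty$ for $y > \beta$, so the turning point $x_0(\beta,s)$ converges to $\min(\beta,1)$. Dominated convergence with majorant $(1-y^2)^{-1/2}$ on the bulk, together with a direct $O(1/\sqrt{s})$ estimate for the boundary-layer contribution near $y = x_0$, gives $\varphi_0(\beta) \to \arcsin(\min(\beta,1))$, upgraded to locally uniform convergence on $(0,\infty)$ by Dini's theorem (using monotonicity of $\varphi_0$ in $\beta$). Inverting produces $\beta(\theta) \to \cos(\theta/2)$ locally uniformly on $(0,\pi]$, and applying the same scheme to $\varphi_0'(\beta)$ (after a change of variables $y = x_0(1-t^2)$ that desingularizes the endpoint) yields $|\beta'(\theta)| \to \tfrac{1}{2}\sin(\theta/2)$. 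Combined with $\sin\theta = 2\sin(\theta/2)\cos(\theta/2)$ and $2^{4/(s-1)} \to 1$, this gives $b_s(\cos\theta) \to 1/4$.

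For (ii) I would carry out a matched asymptotic analysis of $\pi/2 - \varphi_0(\beta)$ as $\beta \to \infty$. Setting $\alpha = \beta^{-(s-1)}$ and substituting $y = \cos\phi$ transforms \eqref{eq:DefIntegral} into
\[
\pi/2 - \varphi_0 = \phi_0 + \int_{\phi_0}^{\pi/2}\!\Big[1 - \frac{\sin\phi}{\sqrt{\sin^2\phi - \alpha\cos^{s-1}\phi}}\Big]\,d\phi, \qquad \sin^2\phi_0 = \alpha\cos^{s-1}\phi_0,
\]
with $\phi_0 = \sqrt{\alpha} + O(\alpha^{3/2})$. An inner rescaling $\phi = \sqrt{\alpha}\,\tau$ on $\tau \in [1,T]$ and an outer expansion on $\phi \in [T\sqrt{\alpha}, \pi/2]$ each produce $\sqrt{\alpha}$-contributions that cancel $\phi_0$, leaving a leading $O(\alpha)$-term whose coefficient can be identified with the Born-approximation integral $\int_{-\infty}^{\infty}(1+u^2)^{-(s+1)/2}du = \sqrt{\pi}\,\Gamma(s/2)/\Gamma((s+1)/2)$. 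Using $\Gamma((s+1)/2) = \tfrac{s-1}{2}\Gamma((s-1)/2)$ yields $\theta \sim A_s\beta^{-(s-1)}$ with $A_s = \sqrt{\pi}\,\Gamma(s/2)/\Gamma((s-1)/2)$; inverting and differentiating reproduces $\theta^{1+2/(s-1)}b_s(\cos\theta)\sin\theta \to 2^{4/(s-1)}A_s^{2/(s-1)}/(s-1) = C_s$.

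For (iii), Stirling shows $C_s = O(1/s)$, so $\sup_{s \geq 3} C_s < \infty$. I would split $(0,\pi]$ at some small $\theta_0$: on $[\theta_0, \pi]$ the bound follows from (i) together with compactness and the crude estimate $2^{4/(s-1)} \leq 4$ for $s \geq 3$, while on $(0,\theta_0]$ the expansion from (ii) gives $\theta^{1+2/(s-1)}b_s(\cos\theta)\sin\theta = C_s + r_s(\theta)$, where tracking the $s$-dependence of the inner/outer matching shows $|r_s(\theta)|$ is bounded uniformly in $s \geq 3$. The principal obstacle is (ii): the naive Taylor expansion in $\alpha$ fails because its first correction integral diverges at the turning point, so the inner/outer matching is unavoidable, and pinning down the exact Gamma-function constant $A_s$ is delicate (the physical Born formulation provides a useful consistency check). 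Propagating these estimates to produce the uniform-in-$s$ bound required by (iii) is the secondary challenge; the exclusion of $s = 2$ is consistent with the logarithmic Coulomb corrections absent for $s > 2$.
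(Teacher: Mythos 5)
Your proposal is a genuinely different route from the paper's, and I think it is the right moment to point out what the paper does that you do not: after the rearrangement \eqref{eq:DefIntegral2}, the paper parametrizes everything by the turning point $x=x_0$ rather than by $\beta$. The substitution $z=y/x_0$ fixes the (square-root) singularity at the constant endpoint $z=1$ instead of the $s$-dependent endpoint $y=x_0(\beta,s)$, producing a single analytic family $\varphi_s(x)=x\int_0^1(1-z^{s-1}-x^2(z^2-z^{s-1}))^{-1/2}\,dz$ that admits a clean $s$-uniform majorant $(1-z)^{-1/2}(1-x^2z^2)^{-1/2}$ and an explicit algebraic inverse $\beta_s(x)=x(1-x^2)^{-1/(s-1)}$. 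This normalization is the structural reason all three parts become short. By contrast, working directly in $\beta$ as you do, the majorant $(1-y^2)^{-1/2}$ is simply not a majorant near the moving turning point $y=x_0<1$ (the integrand blows up like $(x_0-y)^{-1/2}$), so your ``$O(1/\sqrt s)$ boundary-layer estimate'' is carrying more weight than the sketch acknowledges. For (ii), your Born/matched-asymptotics argument leads to the correct constant $A_s=\sqrt{\pi}\,\Gamma(s/2)/\Gamma((s-1)/2)$ and is conceptually fine; the paper instead avoids matching entirely by computing $\varphi_s'(1)=\int_0^1(1-z^{s-1})(1-z^2)^{-3/2}\,dz$ exactly and combining it with the mean-value identity $1-x_s(\varphi)=\varphi_s'(x_s(\psi))^{-1}(\pi/2-\varphi)$ and the closed form of $\beta_s,\beta_s'$.

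The substantive gap is (iii). The statement is a \emph{global}, uniform-in-$(s,\theta)$ bound, and ``tracking the $s$-dependence of the inner/outer matching shows $|r_s(\theta)|$ is bounded uniformly in $s\ge 3$'' is a placeholder, not an argument: matched asymptotics gives you a leading term plus an error, but you have no control on how the error constants and the crossover scale $\theta_0$ behave jointly as $s\to\infty$, and splitting $(0,\pi]$ at a \emph{fixed} $\theta_0$ does not obviously work because near $\theta_0$ the bound coming from (i) degenerates as $\theta_0\to 0$ while the bound from (ii) degrades as $\theta_0$ grows. The paper's proof of (iii) is not asymptotic at all: it uses that $\varphi_s'(x)$ is increasing on $[0,1]$ for $s\ge 3$, hence $x_s'(\varphi)\le 1/\varphi_s'(0)\le 1$ and $(1-x_s(\varphi))^{-1}\le \varphi_s'(1)(\pi/2-\varphi)^{-1}$ for \emph{all} $\varphi\in[0,\pi/2)$, then feeds these into the exact formulas for $\beta_s\beta_s'$ together with the crude bound $\varphi_s'(1)\le c(s-1)$. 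If you want to keep your $\beta$-parametrization, you would need to replace the asymptotic matching by some exact monotone comparison of this kind; otherwise you should adopt the $z=y/x_0$ rescaling, which is precisely what makes the uniform estimate tractable.
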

\begin{remark}\label{rem:Thm1}
	Note that in (i) the limiting collision kernel corresponds to hard-sphere interactions. Writing the kernel \eqref{eq:CollisionKernel} in terms of the angle $ \varphi = (\pi-\theta)/2 $ we get $ |v-v_*|\cos \varphi \, \ind_{\cos\varphi\geq0} $ as $ s\to \infty $.
	
	Furthermore, in (ii) we have $ C_s\to 0 $ as $ s\to \infty $. In fact,
	\begin{align*}
		\dfrac{\sqrt{\pi} \Gamma\left(\frac{s}{2}\right)}{\Gamma\left(\frac{s-1}{2}\right)} = \dfrac{s-1}{2}\dfrac{\sqrt{\pi}\Gamma\left(\frac{s}{2}\right)}{\Gamma\left(\frac{s+1}{2}\right)} = \dfrac{s-1}{2}B\left( \dfrac{s}{2},\dfrac{1}{2} \right) = (s-1)W_{s-1},
	\end{align*} 
	where $ W_{s-1} $ is the Wallis integral. It is known that $\lim_{s\to \infty} \sqrt{s}W_{s-1}=\sqrt{\pi/2} $.
	
	Finally, compare (iii) with \cite[Section 20]{Landau}.
\end{remark}
\subsection{Rearrangement of the deviation angle}
It is convenient to rearrange \eqref{eq:DefIntegral}
\begin{align}\label{eq:DefIntegral2}
	\varphi = x\int_0^1\dfrac{dz}{\sqrt{1-z^{s-1}-x^2(z^2-z^{s-1})}}.
\end{align}
Here, we dropped the index zero in $ \varphi_0, \, x_0 $, used the change of variables $ z=y/x_0 $ and the fact that $ x_0=x $ is the positive root of
\begin{align}\label{eq:RootEq}
	1-x^2-\dfrac{x^{s-1}}{\beta^{s-1}}=0.
\end{align}
We recall that the deviation angle $ \theta = \pi-2\varphi $. One can see that the mappings $ \beta\mapsto x, \, x\mapsto \varphi $ are strictly increasing and real analytic functions $ [0,\infty)\rightarrow[0,1)\rightarrow [0,\pi/2) $ for each $ s\geq2 $. We will use the index $ s $ to indicate that we consider the variable as a function.
\subsection{Proof of Theorem \ref{thm:Kernel}}
\begin{proof}[\textit{Proof of Theorem} \ref{thm:Kernel} \rm{(i)}] We first study the function $ \varphi_s(x) $. The integrand can be written
	\begin{multline*}
		\dfrac{1}{\sqrt{1-z^{s-1}-x^2(z^2-z^{s-1})}} = \dfrac{1}{\sqrt{1-z^{s-1}}\sqrt{1-x^2z^2+x^2z^2 \left( 1-\frac{1-z^{s-3}}{1-z^{s-1}}\right) }}
	\\	\leq \dfrac{1}{\sqrt{1-z}}\dfrac{1}{\sqrt{1-x^2z^2}}.
	\end{multline*}
	Here, we used that $$ 1-\frac{1-z^{s-3}}{1-z^{s-1}}\geq 0,\text { for } s\geq 3 .$$ This yields for any $ x\in \C $ with $ |x|\in [0,1-\varepsilon] $, $ \varepsilon>0 $ a uniform majorant, entailing locally uniform convergence, $$ s\to \infty,\  \varphi_s(x)\to \arcsin x.$$ As a consequence of the analyticity we have ($ x_s $ is the inverse of $ \varphi_s $)
	\begin{align*}
		x_s(\varphi) \to \sin\varphi \text{ and } x_s'(\varphi)\to \cos\varphi
	\end{align*}
	locally uniformly for $ \varphi \in [0,\pi/2) $.
	
	Next, we look at the functions (see \eqref{eq:RootEq})
	\begin{align*}
			\beta_s(x) = \dfrac{x}{(1-x^2)^{1/(s-1)}},
			\quad
			\beta_s'(x) = \dfrac{2}{s-1} \dfrac{1}{(1-x^2)^{s/(s-1)}} + \dfrac{s-3}{s-1}\dfrac{1}{(1-x^2)^{1/(s-1)}}.
	\end{align*}
	Hence, we have the locally uniform convergence for $ x\in[0,1) $ as $ s\to \infty $
	\begin{align*}
			\beta_s(x) \to x, \quad \beta_s'(x) \to 1.
	\end{align*}
	We conclude with the above analysis
	\begin{align}\label{eq:ProofLimitKernel}
			\begin{split}
				b_s(\cos\theta) &= \dfrac{1}{2}\, \dfrac{2^{4/(s-1)}}{\sin\theta} \, \beta_s\left( x_s\left( \dfrac{\pi-\theta}{2}\right) \right) \beta_s'\left( x_s\left( \dfrac{\pi-\theta}{2} \right) \right) x_s'\left( \dfrac{\pi-\theta}{2} \right) 
				\\
				&\to \dfrac{1}{2\sin\theta} \sin\left( \dfrac{\pi-\theta}{2} \right) \cos\left( \dfrac{\pi-\theta}{2}\right) = \dfrac{1}{4}
			\end{split}
	\end{align}
	locally uniformly for $ \theta\in (0,\pi] $ as $ s\to \infty $. Notice that $ \varphi=(\pi-\theta)/2 $ and the extra factor $ 1/2 $ results from $ d\varphi /d\theta =-1/2 $.
	\newline
	\newline
	\noindent\textit{Proof of Theorem} \ref{thm:Kernel} (ii). We have the following equalities for $ \varphi\in [0,\pi/2) $ and some $ \psi\in (\varphi,\pi/2) $
	\begin{align}\label{eq:ProofAsymp}
		\begin{split}
				1-x_s(\varphi) &= \varphi'_s(x_s(\psi))^{-1}\left( \dfrac{\pi}{2}-\varphi\right),
				\\
				\beta_s(x) &= \dfrac{x}{(1+x)^{1/(s-1)}} \, (1-x)^{-1/(s-1)},
				\\
				\beta_s'(x) &= \dfrac{2}{(s-1)(1+x)^{s/(s-1)}} \, (1-x)^{-s/(s-1)} + \dfrac{s-3}{s-1}\dfrac{(1-x)^{-1/(s-1)}}{(1+x)^{1/(s-1)}}.
		\end{split}
	\end{align}
	Combining them yields
	\begin{multline*}
			\lim_{\varphi\to \pi/2} \left(\dfrac{\pi}{2}-\varphi \right)^{(s+1)/(s-1)} \beta_s(x_s(\varphi)) \, \beta_s'(x_s(\varphi))\, x_s'(\varphi) 
			\\
			= \dfrac{1}{2^{1/(s-1)}}\varphi_s'(1)^{1/(s-1)}\, \dfrac{2}{s-1}\dfrac{1}{2^{s/(s-1)}}\, \varphi_s'(1)^{s/(s-1)} \varphi_s'(1)^{-1}\\ = \dfrac{1}{2^{2/(s-1)}}\dfrac{\varphi_s'(1)^{2/(s-1)}}{s-1}.
	\end{multline*}
	Let us note that
	\begin{align*}
		\varphi_s'(x) = \int_0^1\dfrac{1-z^{s-1}}{(1-z^{s-1}-x^2(z^2-z^{s-1}))^{3/2}}\, dz.
	\end{align*}
	and as a consequence we have
	\begin{align*}
			 \varphi_s'(1)=\int_0^1 \dfrac{1-z^{s-1}}{(1-z^2)^{3/2}}\, dz = \dfrac{\sqrt{\pi} \Gamma\left(\frac{s}{2}\right)}{\Gamma\left(\frac{s-1}{2}\right)}.
	\end{align*}
	Using a similar expression as in \eqref{eq:ProofLimitKernel} we get the asserted asymptotics.
	\newline
	\newline
	\noindent\textit{Proof of Theorem} \ref{thm:Kernel} (iii). For the last estimate we use \eqref{eq:ProofAsymp}. Note that $ \varphi_s' $ is increasing for $ s\geq3 $, so that
	\begin{align*}
		\sup_{\varphi \in [0,\pi/2)} x_s'(\varphi) = \varphi_s'(0)^{-1}.
	\end{align*}
	Note that
	\begin{align*}
		\varphi_s'(0) = \int_0^1 \dfrac{dz}{\sqrt{1-z^{s-1}}} \geq 1.
	\end{align*}
	The last inequality follows from the fact that $ s\mapsto \varphi_s'(0) $ is a decreasing function and $ \varphi_s'(0)\to 1 $ as $ s\to \infty $.
	This implies $ x_s' \leq 1 $.
	Using \eqref{eq:ProofAsymp} for $ x\in [0,1) $ we obtain
	\begin{align*}
		\beta_s(x) \beta_s'(x)\leq \dfrac{2}{s-1} (1-x)^{(s+1)/(s-1)} + \dfrac{s-3}{s-1} (1-x)^{-2/(s-1)}.
	\end{align*}
	Since $ \varphi_s' $ is increasing for $ s\geq3 $ we have
	\begin{align*}
		(1-x_s(\varphi))^{-1}\leq \varphi_s'(1)\left( \dfrac{\pi}{2}-\varphi \right)^{-1}.
	\end{align*}
	We then obtain with the previous estimates
	\begin{multline}\label{eq:ProofUniformBound}
		\left( \dfrac{\pi}{2}-\varphi \right)^{(s+1)/(s-1)} \beta_s(x_s(\varphi))\beta_s'(x_s(\varphi))x_s'(\varphi)\\ \leq \dfrac{2}{s-1} \varphi_s'(1)^{(s+1)/(s-1)} + \dfrac{s-3}{s-1}\left( \dfrac{\pi}{2}-\varphi \right)  \varphi'_s(1)^{2/(s-1)}.
	\end{multline}
	One can see that $$ \varphi_s'(1)\leq c(s-1), $$ for some constant $ c>0 $. All in all, the right hand side in \eqref{eq:ProofUniformBound} is uniformly bounded in $ s\geq3 $ and $ \varphi\in [0,\pi/2] $. This implies the uniform bound.
\end{proof}
This completes the proof of the limit of the non-cutoff collision kernel to the hard-sphere kernel. In the next section, we further study the behavior of $b_s$ for $ \theta\to 0 $ when $s\to \infty$. 
\section{Asymptotics of the non-cutoff collision kernel}\label{sec.asymp} We now study the asymptotics of the singular layer of $ b_s(\cos\theta) $ near $ \theta\simeq 0 $ when $ s\to \infty $. To this end, we note that Theorem \ref{thm:Kernel} (ii) in combination with Remark \ref{rem:Thm1} yields
\begin{align*}
		b_s(\cos\theta) \sim \dfrac{1}{s-1}\theta^{-2-2/(s-1)} \sim \dfrac{\theta^{-2}}{s} \qquad \text{as } s\to\infty.
\end{align*}
Thus, we need to look at the scaled function $$ \psi \mapsto b_s(\cos(\psi/\sqrt{s})),$$ with $ \theta= \psi/\sqrt{s} $. In the following, we use this scaling to compute the limit $ s\to \infty $. First, we derive a similar formula to \eqref{eq:DefIntegral2}. Note that  $$\varphi=\frac{\pi}{2}-\frac{\theta}{2}=\frac{\pi}{2}-\frac{\psi}{2\sqrt{s}}.$$ Let us define
\begin{align}\label{eq:DefRescaledFunction}
	\dfrac{\xi_s(\psi)}{2s} := 1-x_{s}\left( \dfrac{\pi}{2}-\dfrac{\psi}{2\sqrt{s}} \right),
\end{align}
where $ \xi_s $ is defined for $ \psi\in[0,\pi\sqrt{s}] $. The inverse function for $ \xi\in[0,2s] $ is given by
\begin{multline}\label{eq:DefRescaledIntegral}
		\psi_s(\xi) = 2\sqrt{s}\left[\dfrac{\pi}{2} -\varphi_s\left(1-\dfrac{\xi}{2s}\right)\right] \\= 2\sqrt{s}\int_{0}^1 \left( \dfrac{1}{\sqrt{1-z^2}}- \dfrac{1}{\sqrt{1-z^{s-1}-\left(1-\frac{\xi}{2s}\right)^2(z^2-z^{s-1})}}\right) \, dz 
		\\ \quad + \dfrac{\xi}{\sqrt{s}} \int_0^1\dfrac{1}{\sqrt{1-z^{s-1}-\left(1-\frac{\xi}{2s}\right)^2(z^2-z^{s-1})}}\, dz.
	\end{multline}
Notice that in the last equality we used the definition of $ \varphi_s $ in \eqref{eq:DefIntegral2}. Note that $ \psi_s $ is an analytic function on $ (0,2s) $. With this we can state the asymptotic behavior.
\begin{theorem}\label{thm:RescaledAsymptoticsKernel}
	The angular part $ b_s(\cos\theta) $, $ s\geq2 $, satisfies the following asymptotic limit
		\begin{align*}
			\lim_{s\to \infty} b_s\left( \cos\left( \dfrac{\psi}{\sqrt{s}} \right)  \right) =  \Phi(\psi),
		\end{align*}
		which holds locally uniformly for $ \psi\in (0,\infty) $. Here, $ \Phi:(0,\infty) \to \R $ is real analytic satisfying 
		\begin{align}\label{eq:ThmAsymptSingularLayerLimit}
			\quad \lim_{\psi\to \infty} \Phi(\psi) = \dfrac{1}{4}.
		\end{align}
		Furthermore, we have
		\begin{align}\label{eq:ThmAsymptSingularLayerExpansion}
				\Phi(\psi)=\dfrac{1}{\psi^2} + \dfrac{1}{\sqrt{\pi}}\, \dfrac{1}{\psi} + \Phi_0(\psi),
		\end{align}
		where $ \Phi_0: [0,\infty)\rightarrow \R $ is continuous.
\end{theorem}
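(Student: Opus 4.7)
My plan is to work in the rescaled variable $\xi=\xi_s(\psi)$, so that $x_s(\pi/2-\psi/(2\sqrt{s}))=1-\xi/(2s)$. Differentiating the definition \eqref{eq:DefRescaledFunction} gives $x_s'(\pi/2-\psi/(2\sqrt{s}))=\xi_s'(\psi)/\sqrt{s}$. Substituting into the product formula
\[
b_s(\cos\theta)=\frac{2^{4/(s-1)}}{2\sin\theta}\,\beta_s(x)\,\beta_s'(x)\,x_s'(\varphi)
\]
from the proof of Theorem~\ref{thm:Kernel} (with $\theta=\psi/\sqrt{s}$, $\varphi=(\pi-\theta)/2$) yields
\[
b_s\!\left(\cos(\psi/\sqrt{s})\right)=\frac{2^{4/(s-1)}}{2\sqrt{s}\,\sin(\psi/\sqrt{s})}\,\beta_s(1-\xi/(2s))\,\beta_s'(1-\xi/(2s))\,\xi_s'(\psi).
\]
The prefactor converges to $1/(2\psi)$; using $1-x^2\sim\xi/s$ together with the explicit formulas \eqref{eq:ProofAsymp}, I verify for each fixed $\xi>0$ that $\beta_s(1-\xi/(2s))\to 1$ and $\beta_s'(1-\xi/(2s))\to 1+2/\xi$. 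It remains to identify the limit of $\xi_s$ with the necessary regularity.

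To analyse $\xi_s$ I pass to the limit in \eqref{eq:DefRescaledIntegral}. Rescaling by $z=1-u/s$ (so that $(1-u/s)^{s-1}\to e^{-u}$) and invoking dominated convergence with an $s$-independent integrable majorant, the second summand (carrying the factor $\xi/\sqrt{s}$) vanishes, while the first summand converges to
\[
\psi_\infty(\xi)=2\int_0^\infty\left(\frac{1}{\sqrt{2u}}-\frac{1}{\sqrt{2u+\xi(1-e^{-u})}}\right)du,
\]
which converges absolutely since the integrand is $O(1/\sqrt{u})$ near $u=0$ and $O(\xi/u^{3/2})$ at infinity. Analyticity of the integrand in $\xi$ combined with a locally uniform integrable majorant shows that $\psi_\infty$ extends holomorphically to a complex neighborhood of $[0,\infty)$. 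Since $\psi_\infty'(\xi)>0$, the analytic inverse function theorem yields a real-analytic inverse $\xi_\infty\colon[0,\infty)\to[0,\infty)$; the same analysis applied to $\psi_s'$ gives locally uniform convergence $\xi_s\to\xi_\infty$ and $\xi_s'\to\xi_\infty'$ on compact subsets of $(0,\infty)$, so that
\[
\Phi(\psi)=\frac{(\xi_\infty(\psi)+2)\,\xi_\infty'(\psi)}{2\psi\,\xi_\infty(\psi)},
\]
real analytic on $(0,\infty)$, together with the locally uniform convergence claim.

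For \eqref{eq:ThmAsymptSingularLayerLimit}, the substitution $u=\xi v$ extracts a factor $\sqrt{\xi}$:
\[
\psi_\infty(\xi)=2\sqrt{\xi}\int_0^\infty\left(\frac{1}{\sqrt{2v}}-\frac{1}{\sqrt{2v+1-e^{-\xi v}}}\right)dv.
\]
As $\xi\to\infty$, $e^{-\xi v}\to 0$ for $v>0$ and dominated convergence together with the elementary identity $\int_0^\infty[w^{-1/2}-(w+1)^{-1/2}]\,dw=2$ yields $\psi_\infty(\xi)/\sqrt{\xi}\to 2$. Inverting, $\xi_\infty(\psi)\sim\psi^2/4$ and $\xi_\infty'(\psi)\sim\psi/2$, so the formula for $\Phi$ gives $\Phi(\psi)\to 1/4$.

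For the expansion \eqref{eq:ThmAsymptSingularLayerExpansion}, I Taylor-expand $\psi_\infty$ at $\xi=0$. Differentiating under the integral and using integration by parts together with $\Gamma(1/2)=\sqrt{\pi}$ yields
\[
\int_0^\infty\frac{1-e^{-u}}{u^{3/2}}\,du=2\sqrt{\pi},\qquad \int_0^\infty\frac{e^{-u}-e^{-2u}}{u^{3/2}}\,du=2\sqrt{\pi}(\sqrt{2}-1),
\]
so $\psi_\infty'(0)=\sqrt{\pi/2}$ and $\psi_\infty''(0)=-\sqrt{\pi}(\sqrt{2}-1)/\sqrt{2}$. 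Series inversion gives $\xi_\infty(\psi)=c\psi+d\psi^2+O(\psi^3)$ with $c=\sqrt{2/\pi}$ and $d=-c^3\psi_\infty''(0)/2=(\sqrt{2}-1)/\pi$. Writing $\Phi(\psi)=\xi_\infty'(\psi)/(2\psi)+\xi_\infty'(\psi)/(\psi\,\xi_\infty(\psi))$ and expanding each summand, the $1/\psi^2$ singularity arises from the second term, and the coefficient of $1/\psi$ equals $c/2+d/c$, which simplifies to $1/\sqrt{\pi}$. Continuity of $\Phi_0$ on $[0,\infty)$ then follows from real analyticity of $\xi_\infty$ near $0$. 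The main technical obstacle is upgrading the pointwise limits to locally uniform convergence jointly with derivatives (so that the inversion and series manipulations are justified), together with the somewhat delicate algebraic simplification $c/2+d/c=1/\sqrt{\pi}$ that delivers the stated coefficient.
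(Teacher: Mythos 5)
Your proposal is correct and follows essentially the same route as the paper's proof: you pass to the rescaled variable $\xi$, obtain the same limiting integral $\psi_\infty$ by the substitution $z=1-u/s$ and dominated convergence, use analyticity and $\psi_\infty'>0$ to invert, and derive the same formula $\Phi(\psi)=\xi_\infty'(\psi)/(\psi\,\xi_\infty(\psi))+\xi_\infty'(\psi)/(2\psi)$. The only cosmetic differences are that you compute the limits of $\beta_s$ and $\beta_s'$ separately (giving $1$ and $1+2/\xi$) rather than carrying the two-term decomposition through, and you obtain the small-$\psi$ expansion by direct Lagrange inversion of $\psi_\infty$ rather than via the auxiliary functions $J$ and $f$ the paper introduces; both yield the coefficient $1/\sqrt{\pi}$.
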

\begin{remark}
	Note that the singularity $ 1/\psi^2 $ of $ \Phi $ for $ \psi\to0 $ is consistent with the asymptotics in Theorem \ref{thm:Kernel} (ii), since $ s C_s \to 1 $ as $ s\to \infty $. Furthermore, the result of the limit $ \psi\to \infty $ coincides with Theorem \ref{thm:Kernel} (i).
\end{remark}
\begin{proof}[Proof of Theorem \ref{thm:RescaledAsymptoticsKernel}]The proof consists of the following 4 steps.\\ \\
	\textit{Step 1.} We first derive the limits
	\begin{align}
			\lim_{s\to \infty} \psi_s(\xi)&=\psi_\infty(\xi) = 2\xi\int_0^\infty \dfrac{1-e^{-\zeta}}{\sqrt{2\zeta}\sqrt{h(\zeta,\xi)}(\sqrt{2\zeta}+\sqrt{h(\zeta,\xi)})} \, d\zeta, \quad \label{eq:ThmIntegralLimit}
			\\
			\lim_{s\to \infty} \psi_s'(\xi)&=\psi_\infty'(\xi) = \int_0^\infty \dfrac{1-e^{-\zeta}}{h(\zeta,\xi)^{3/2}}\, d\zeta,\label{eq:ThmIntegralLimitDerivative}
	\end{align}where $$h(\zeta,\xi)= 2\zeta + \xi\left( 1-e^{-\zeta} \right).$$
	To this end we choose $ \xi \in [0,\infty) $ and assume $ s $ large enough such that $ \xi \in [0,2s] $. Let us write
	\begin{align*}
		1-z^{s-1}-\left(1-\frac{\xi}{2s}\right)^2(z^2-z^{s-1}) = 1-z^2+\left( \dfrac{\xi}{s}-\dfrac{\xi^2}{4s^2}\right) (z^2-z^{s-1})=: g_s(z,\xi).
	\end{align*} 
	Since $ g_s\geq1-z^2 $ the second integral in \eqref{eq:DefRescaledIntegral} goes to zero as $ s\to \infty $. The first term in \eqref{eq:DefRescaledIntegral} can be rearranged to get
	\begin{align*}
			2\sqrt{s} &\int_0^1\dfrac{(  \xi/s - \xi^2/4s^2 )(z^2-z^{s-1}) }{\sqrt{1-z^2}\sqrt{g_s(z,\xi)} (\sqrt{1-z^2} + \sqrt{g_s(z,\xi) }) }\, dz =:I_s(\xi)
	\end{align*}
	We now perform the change of variables $ z=1-\zeta/s $ to get with
	\begin{align*}
		1-\left( 1-\dfrac{\zeta}{s} \right)^2 &= \dfrac{1}{s} \left( 2\zeta- \dfrac{\zeta^2}{s} \right),
		\\
		g_s\left( 1-\dfrac{\zeta}{s}, \xi\right) &= \dfrac{1}{s} \left( 2\zeta- \dfrac{\zeta^2}{s} \right) +\dfrac{1}{s} \left( \xi-\dfrac{\xi^2}{4s} \right) \left( \left( 1-\dfrac{\zeta}{s} \right)^2-\left( 1-\dfrac{\zeta}{s} \right)^{s-1}  \right)\\&=: \dfrac{1}{s} h_s(\zeta,\xi),
	\end{align*}
and	the formula
	\begin{align}\label{eq:ProofAsympIntegral}
		I_s(\xi) = \left( 2\xi - \dfrac{\xi^2}{2s}\right) \int_{0}^s \dfrac{(1-\zeta/s)^2-(1-\zeta/s)^{s-1}}{\sqrt{2\zeta-\zeta^2/s}\sqrt{h_s(\zeta,\xi)}(\sqrt{2\zeta-\zeta^2/s}+\sqrt{h_s(\zeta,\xi)})} \, d\zeta.
	\end{align}
	Using that $ \zeta\leq s $ and $ \xi \leq 2s $ we can obtain
	$$
		2\zeta- \dfrac{\zeta^2}{s}\geq\zeta,$$ and $$  \left( 1-\dfrac{\zeta}{s} \right)^2-\left( 1-\dfrac{\zeta}{s} \right)^{s-1}\ge 0 .$$ Hence, we have
$h_s(\zeta,\xi) \geq \zeta.$
	In addition, we also have
	\begin{align*}
		\left( 1-\dfrac{\zeta}{s} \right)^2-\left( 1-\dfrac{\zeta}{s} \right)^{s-1} \leq \min\left\{1, \dfrac{s-3}{s} \zeta \left( 1-\dfrac{\zeta}{s} \right)^2\right\}\leq  \min\left\{1,\zeta\right\}.
	\end{align*}
	Thus, the integrand in \eqref{eq:ProofAsympIntegral} can be estimated by
	\begin{align*}
		\min \left\lbrace \dfrac{1}{2\sqrt{\zeta}} , \dfrac{1}{2\, \zeta^{3/2}} \right\rbrace .
	\end{align*}
	In conjunction with
	\begin{align*}
			\lim_{s\to \infty} h_s(\zeta,\xi) = 2\zeta + \xi \left( 1- e^{-\zeta}\right) =h(\zeta,\xi)
	\end{align*}
	we conclude the locally uniform convergence 
	\begin{align*}
			\lim_{s\to \infty} \psi_s(\xi) = \psi_\infty(\xi),
	\end{align*}
	where $ \psi_\infty $ is given in \eqref{eq:ThmIntegralLimit}. Since the above estimates also hold in a neighborhood of $ \xi\in(0,\infty) $ in the complex plane, the limit is real analytic. A calculation allows to derive the formula \eqref{eq:ThmIntegralLimitDerivative}. Alternatively, one can compute the derivative of \eqref{eq:DefRescaledIntegral} and mimic the preceding computation. 

	\textit{Step 2.} Since $ \psi_\infty'>0 $ we also have from the analyticity and the locally uniform convergence
	\begin{align*}
		\xi_s(\psi) \to \xi_\infty(\psi) = \psi_\infty^{-1}(\psi), \quad \xi_s'(\psi) \to \xi_\infty'(\psi) = \dfrac{1}{\psi_\infty'(\xi_\infty(\psi))},
	\end{align*}
	locally uniformly for $ \psi\in (0,\infty) $. Furthermore, by \eqref{eq:DefRescaledFunction}
	\begin{align*}
			\lim_{s\to \infty} x_{s}\left( \dfrac{\pi}{2}-\dfrac{\psi}{2\sqrt{s}} \right) = \lim_{s\to \infty} 1- \dfrac{\xi_s(\psi)}{2s} =1.
	\end{align*}
	This yields with the definition of $ b_s(\cos(\psi/s)) $, cf. \eqref{eq:ProofLimitKernel} and formulas \eqref{eq:ProofAsymp},
	\begin{multline*}
		\lim_{s\to \infty} b_s\left( \cos \left( \dfrac{\psi}{\sqrt{s}} \right)  \right) \\= \lim_{s\to \infty} \dfrac{1}{2}\dfrac{1}{\sin(\psi/\sqrt{s})} \dfrac{2}{(s-1)} \dfrac{1}{2} \, \left( 1-x_s\left( \dfrac{\pi}{2} -\dfrac{\psi}{2\sqrt{s}} \right)  \right) ^{-(s+1)/(s-1)} x_s'\left( \dfrac{\pi}{2} -\dfrac{\psi}{2\sqrt{s}} \right) 
		\\
		\qquad+ \lim_{s\to \infty} \dfrac{1}{2}\dfrac{1}{\sin(\psi/\sqrt{s})} \left( 1-x_s\left( \dfrac{\pi}{2} -\dfrac{\psi}{2\sqrt{s}} \right)  \right) ^{-2/(s-1)} x_s'\left( \dfrac{\pi}{2} -\dfrac{\psi}{2\sqrt{s}} \right).
	\end{multline*}
	Using a Taylor expansion we can replace $ \sin(\psi/\sqrt{s}) $ by $ \psi/\sqrt{s} $ without modifying the value of the limit. We use \eqref{eq:DefRescaledFunction} and
	\begin{align*}
			x_s'\left( \dfrac{\pi}{2} -\dfrac{\psi}{2\sqrt{s}} \right) = \dfrac{1}{\sqrt{s}} \xi_s'(\psi),
	\end{align*}
	which is a consequence of \eqref{eq:DefRescaledFunction}, to obtain
	\begin{align}\label{eq:ProofAsympLimit}
		\lim_{s\to \infty} b_s\left( \cos \left( \dfrac{\psi}{\sqrt{s}} \right)  \right) = 	\dfrac{\xi_\infty'(\psi)}{\xi_\infty(\psi)\psi} + \dfrac{\xi_\infty'(\psi)}{2\psi} =: \Phi(\psi).
	\end{align}
	
	\textit{Step 3.} We now use a Taylor approximation for  \eqref{eq:ProofAsympLimit}. It is convenient to define
	\begin{align*}
		\psi_\infty(\xi) = 2 \xi J(\xi), \quad f(\psi) := 2\, \xi_\infty'(\psi)\, J(\xi_\infty(\psi)).
	\end{align*}
	Here, $ J(\xi) $ is the integral in \eqref{eq:ThmIntegralLimit}. This yields
	\begin{align*}
		\xi_\infty(\psi) = \dfrac{\psi}{2 J(\xi_\infty(\psi))}, \quad  \Phi(\psi)  = \dfrac{f(\psi)}{\psi^2} + \dfrac{\xi_\infty'(\psi)}{2\psi}.
	\end{align*}
	We then have 
	\begin{align*}
		\Phi(\psi) = \dfrac{f(0)}{\psi^2} + \dfrac{f'(0)+\xi_\infty'(0)/2}{\psi} + \dfrac{1}{\psi}\left( \dfrac{f(\psi) - f(0)-f'(0)\psi}{\psi} + \dfrac{\xi_\infty'(\psi)-\xi'_\infty(0)}{2}\right),
	\end{align*}
	which defines $ \Phi_0 $. The following formulas hold
	\begin{align}\label{eq:IntegralValuesZero}
		\xi'_\infty(0)=\sqrt{\dfrac{2}{\pi}}, \quad f(0) = 1, \quad f'(0) = \dfrac{\sqrt{2}-1}{\sqrt{2\pi}}.
	\end{align}
	With this we derive $$ f'(0)+\frac{\xi_\infty'(0)}{2} = \dfrac{1}{\sqrt{\pi}},$$ which yields the expression in \eqref{eq:ThmAsymptSingularLayerExpansion}. 
	
	The formulas \eqref{eq:IntegralValuesZero} can be calculated without difficulty, since the integrals are well-defined. For instance,
	\begin{align*}
		2J(0) = \psi_\infty'(0) = \dfrac{1}{\xi_\infty'(0)} = \int_0^\infty \dfrac{1-e^{-\zeta}}{(2\zeta)^{3/2}}\, d\zeta = \int_0^\infty \dfrac{e^{-\zeta}}{\sqrt{2\zeta}}\, d\zeta = \sqrt{\dfrac{\pi}{2}}.
	\end{align*} 
	
	\textit{Step 4.} Finally, for the limit in \eqref{eq:ThmAsymptSingularLayerLimit} we have with \eqref{eq:ProofAsympLimit}
	\begin{align*}
		\lim_{\psi\to \infty} \Phi(\psi)= \lim_{\xi\to \infty} \left(  \dfrac{1}{2\xi^2\, J(\xi) \, \psi_\infty'(\xi)} + \dfrac{1}{4\, \xi\, J(\xi)\, \psi_\infty'(\xi)}\right).
	\end{align*}
	We prove below that
	\begin{align*}
		 \lim_{\xi\to \infty} \sqrt{\xi} \psi_\infty'(\xi)=\lim_{\xi\to \infty} \sqrt{\xi} J(\xi)  =1,
	\end{align*}
	which implies the assertion. For the preceding two limits we use the change of variables $ \zeta=\xi z $ to get
	\begin{align*}
		\sqrt{\xi} \, \psi_\infty'(\xi) = \int_0^\infty \dfrac{1-e^{-\xi z}}{(2z+1-e^{-\xi z})^{3/2}}\, dz.
	\end{align*}
	The integrand can be estimated by (we use here $ \xi\geq1 $ say)
	\begin{align*}
		\min \left\lbrace \dfrac{1}{z^{3/2}} , \dfrac{1}{\sqrt{1-e^{-\xi z}}} \right\rbrace \leq 	\min \left\lbrace \dfrac{1}{z^{3/2}} , \dfrac{1}{\sqrt{1-e^{-z}}} \right\rbrace.
	\end{align*}
	Hence, we can use the dominated convergence theorem to obtain the stated limit. A similar computation applies to $ \sqrt{\xi} J(\xi) $. This concludes the proof.
\end{proof}
This completes the proof of the asymptotics of the singularity for $\theta \simeq 0$ as $s\to \infty$. In the next section, we provide a proof of the limit of solutions to the spatially homogeneous Boltzmann equation without cutoff to solutions of the homogeneous Boltzmann equation for hard-spheres using the estimates in Sections~\ref{sec.kernel} and \ref{sec.asymp}.

\section{Convergence of the solution for the homogeneous Boltzmann equation}\label{sec:HomogBE}
In this section, we consider the spatially homogeneous Boltzmann equation
\begin{align}\label{eq:HomogBE}
	\partial_t f = Q(f,f), \quad f(0,\cdot) = f_0(\cdot)
\end{align}
with collision kernel $ B_s(|v-v_*|, n\cdot \sigma) $, $ s>2 $, given in \eqref{eq:CollisionKernel}. Let us first recall the following well-posedness result for cutoff kernels with hard potentials $ \gamma\in(0,1] $ (e.g. hard-sphere corresponding to $ s=\infty $), see \cite[Theorem 1.1]{MischlerWennberg1999HomogBE} and \cite[Section 3.7, Theorem 3]{Villani2002Review}. The first well-posedness results are due to Arkeryd \cite{Arkeryd1972BoltzmannEqI,Arkeryd1972BoltzmannEqII}. We use here the weighted spaces $ L^1_p $ with weight function $ (1+|v|^2)^{p/2} $.
\begin{lemma}\label{lem:CutoffWellPosed}
	Let $ f_0\in L^1_2 $, then there is a unique solution $ f\in C([0,\infty); L^1_2) $ to \eqref{eq:HomogBE} which preserves energy, i.e. for all $ t\geq0 $
	\begin{align*}
		\int_{\R^3} |v|^2\, f(t,v)\, dv = \int_{\R^3} |v|^2\, f_0(v)\, dv.
	\end{align*}
\end{lemma}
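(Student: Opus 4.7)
The plan is to prove the lemma by the classical three-step strategy for the spatially homogeneous Boltzmann equation with angular cutoff and hard potentials: first solve a truncated problem in which the kernel is bounded and the collision operator is globally Lipschitz on $L^1_2$; second propagate moments uniformly in the truncation parameter in order to pass to the limit; and third prove $L^1$ stability, from which uniqueness follows. Throughout I would use the cutoff splitting $Q=Q^+-Q^-$ with $Q^-(f,f)=f\,L(f)$, which is well defined on $L^1_2$ once the angular part is integrable on $S^2$ and sufficient velocity moments are available.

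For existence, I would truncate by replacing $|v-v_*|^{\gamma}$ with $|v-v_*|^{\gamma}\wedge N$, so that $L^N$ is bounded on $L^1_2$ and the truncated $Q^N$ is locally Lipschitz. A Banach fixed-point argument on the Duhamel formulation
\[
f^N(t) = e^{-\int_0^t L^N(f^N)(\tau)\, d\tau}\, f_0 + \int_0^t e^{-\int_\tau^t L^N(f^N)(\sigma)\, d\sigma}\, Q^{+,N}(f^N,f^N)(\tau)\, d\tau
\]
yields a unique nonnegative local solution, extended globally using preservation of the $L^1$ mass. Choosing the test functions $1,v,|v|^2$ in the weak form and invoking the elementary collisional identities $v'+v'_*=v+v_*$ and $|v'|^2+|v'_*|^2=|v|^2+|v_*|^2$ gives exact conservation of mass, momentum and energy for $f^N$. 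To pass to the limit $N\to\infty$, I would apply the Povzner inequality valid for hard potentials: it provides, for any $p>2$, a uniform-in-$N$ bound $\tfrac{d}{dt}\int(1+|v|^2)^{p/2}f^N\,dv \leq C_p \int(1+|v|^2)^{p/2}f^N\,dv$, so moments of order $p$ remain controlled on every $[0,T]$. Tightness together with the Dunford--Pettis criterion then extracts a weakly convergent subsequence whose limit lies in $C([0,\infty);L^1_2)$, solves \eqref{eq:HomogBE}, and preserves energy.

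For uniqueness, given two solutions $f,g$ with the same initial datum, set $w=f-g$; bilinearity and symmetrization give $Q(f,f)-Q(g,g)=Q(w,f+g)$. Testing against $\sgn(w)(1+|v|^2)$ and using the hard-sphere bound $B\leq C\,|v-v_*|$ combined with the propagated moments of order three of $f$ and $g$ produces
\[
\frac{d}{dt}\|w(t)\|_{L^1_2} \leq C(T)\, \|w(t)\|_{L^1_2} \quad \text{on } [0,T],
\]
and Gronwall forces $w\equiv 0$. \emph{The main obstacle} is closing the moment loop: because the cross section carries the hard-potential factor $|v-v_*|^{\gamma}$ with $\gamma>0$, the $L^1_2$ stability estimate is driven by moments strictly above two, so the Povzner propagation of super-energetic moments must be carried out \emph{before} the stability inequality can even be written. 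This super-energetic moment propagation, going back to Povzner and Arkeryd and refined by Mischler--Wennberg, is the technical core of the argument and the point at which the hard-potential assumption $\gamma>0$ is genuinely used.
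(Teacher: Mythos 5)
The paper does not prove this lemma; it is quoted as a known result with citations to Mischler--Wennberg, Villani, and Arkeryd, so there is no internal argument to compare against. Your sketch follows the classical three-step strategy of those references, but it contains a genuine gap in the moment step that also undermines the uniqueness argument as written.

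Since $f_0$ is assumed to lie only in $L^1_2$, the quantity $\int(1+|v|^2)^{p/2}f_0\,dv$ may be infinite for every $p>2$, so the linear differential inequality you write, $\tfrac{d}{dt}\|f^N_t\|_{L^1_p}\leq C_p\|f^N_t\|_{L^1_p}$, carries no information: both sides are initially infinite, and Gronwall propagates nothing. What the hard-potential Povzner estimate actually yields is a superlinear dissipation, schematically
\begin{align*}
\frac{d}{dt}\|f_t\|_{L^1_p} \;\leq\; -\,c_p\,\|f_t\|_{L^1_{p+\gamma}} \;+\; C_p\,\|f_t\|_{L^1_p},
\end{align*}
and combined with an interpolation inequality of the form $\|f\|_{L^1_{p+\gamma}}\gtrsim \|f\|_{L^1_p}^{1+\varepsilon}\|f\|_{L^1_2}^{-\varepsilon'}$ this gives a Riccati-type ODE bound that is finite for every $t>0$ \emph{independently of the initial moment}. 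This instantaneous \emph{generation} (appearance) of higher moments is precisely what Mischler--Wennberg prove and use; it is not a propagation statement, and it relies on $\gamma>0$ in an essential way. Your truncation $|v-v_*|^{\gamma}\wedge N$ destroys the superlinear loss term for large velocities and hence cannot produce a uniform-in-$N$ bound of this kind, so the Povzner step should be run either on the untruncated kernel for the limit object, or with a truncation that preserves the hard-potential growth. Relatedly, your uniqueness Gronwall in $L^1_2$ invokes "propagated moments of order three," but with $L^1_2$ data these are only available for $t>0$ by generation; closing the stability estimate down to $t=0$ then requires the further short-time argument (combined with energy conservation, which fixes the admissible class) that is the technical heart of the Mischler--Wennberg uniqueness proof.
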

\begin{remark}
	Let us mention that the condition of the energy conservation is essential for uniqueness \cite{LuWennberg2002SolutionIncEnergy,Wennberg1999ExampleNonuniqueness}.
\end{remark}

Next, we consider the non-cutoff kernel $ B_s $. Since we are interested in the limit $ s\to\infty $, we can assume $ s>5 $ so that
\begin{align}\label{eq:NonCutoffKernelAssumpt}
		\gamma(s)= \dfrac{s-5}{s-1}>0,\quad \int_0^\pi \theta \, b_s(\cos\theta) \sin \theta\, d\theta \leq c_0,
\end{align}
where the constant $ c_0 $ is independent of $ s>5 $, see Theorem \ref{thm:Kernel} (iii). In this case, we can use the weak formulation of \eqref{eq:HomogBE} by testing with functions $ \psi\in C^1_b( [0,\infty)\times \R^3) $, see e.g. \cite[Section 4.1]{Villani2002Review}. The collision operator can be define by means of the pre-postcollisional change of variables
\begin{align*}
		\int_{\R^3} Q_s(f,f)(v)\, \psi(v)\, dv = \int_{\R^3} \int_{\R^3} |v-v_*|^{\gamma} f f_* \int_{S^2} b_s(\cos\theta)\, (\psi'-\psi)\, d\sigma dv_*dv.
\end{align*}
For the integral on the sphere we have, via a Taylor approximation,
\begin{align*}
	\left| \int_{S^2} b_s(\cos\theta)\, (\psi'-\psi)\, d\sigma \right| \leq C_0\norm[C^1(\R^3)]{\psi} |v-v_*|,
\end{align*}
for some constant $ C_0>0 $ independent of $ s>5 $. Let us also define the entropy of $ f $
\begin{align*}
		H(f) = \int_{\R^3} f\ln f \, dv.
\end{align*}
We also recall the existence of weak solutions to the homogeneous Boltzmann equation, which is the content of the following lemma, see e.g. \cite[Section 4]{Villani1998NewClassWeakSol} and \cite[Section 4.7, Theorem 9 (ii)]{Villani2002Review}. With a slight abuse of notation we write $f^s_t(v):=f^s(t,v)$ and $f^\infty_t(v):=f^\infty(t,v)$ to describe the solutions to the Boltzmann equations with kernels $B_s$ and $B_\infty$, respectively.
\begin{lemma}\label{lem:NonCutoffEx}
	Let $ f_0\in L^1_{1+\gamma+\delta} $, for $ \delta>0 $ arbitrary, with finite entropy. Under the conditions \eqref{eq:NonCutoffKernelAssumpt} there is a weak solution $ f^s\in L^\infty([0,\infty); L^1_{1+\gamma+\delta}) $ to \eqref{eq:HomogBE} which preserves energy. Furthermore, we have $ H(f^s_t)\leq H(f_0) $ for all $ t\geq0 $.
\end{lemma}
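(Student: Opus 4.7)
The plan is to follow the classical Arkeryd--Villani approach: approximate the non-cutoff kernel by angular cutoff kernels, solve the regularized problems via Lemma~\ref{lem:CutoffWellPosed}, and pass to the limit using uniform moment and entropy estimates. For each $n\in\N$ I define the truncation $b_{s,n}(\cos\theta):=b_s(\cos\theta)\mathbf{1}_{\theta\ge 1/n}$ and kernel $B_{s,n}(|v-v_*|,\cos\theta):=|v-v_*|^{\gamma(s)} b_{s,n}(\cos\theta)$. Since $b_{s,n}$ is integrable on the sphere and $\gamma(s)\in(0,1]$ for $s>5$, the cutoff hard-potential theory (Lemma~\ref{lem:CutoffWellPosed}, in its natural extension, cf.\ \cite{MischlerWennberg1999HomogBE}) produces an energy-preserving solution $f^{s,n}\in C([0,\infty); L^1_2)$ with $f^{s,n}(0,\cdot)=f_0$.

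Next I would derive uniform-in-$n$ estimates. Mass and energy are conserved identities. Classical Povzner inequalities, whose only dependence on the angular kernel is through $\int_0^\pi\theta^2\, b_{s,n}(\cos\theta)\sin\theta\,d\theta\le c_0$ (uniform by \eqref{eq:NonCutoffKernelAssumpt}), propagate the weighted norm $\sup_{t\in[0,T]}\|f^{s,n}_t\|_{L^1_{1+\gamma+\delta}}\le C(T)$. The $H$-theorem at the cutoff level yields $H(f^{s,n}_t)\le H(f_0)$, and combined with the moment bound this gives, via the de la Vallée-Poussin--Dunford--Pettis criterion, weak $L^1$-equicompactness of $\{f^{s,n}_t\}_n$ uniformly for $t$ in compacts.

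Time regularity comes from the weak formulation together with the Taylor estimate $|\int_{S^2} b_{s,n}(\cos\theta)(\psi'-\psi)\,d\sigma|\le C_0\|\psi\|_{C^1}|v-v_*|$ recalled in the excerpt (uniform in $n$ by \eqref{eq:NonCutoffKernelAssumpt}), which in combination with the moment bound yields
\[ \left|\tfrac{d}{dt}\int_{\R^3} f^{s,n}_t\, \psi\, dv\right| \le C_0\|\psi\|_{C^1}\int_{\R^6} f^{s,n}_t f^{s,n}_{t,*}|v-v_*|^{1+\gamma(s)}\, dv_*\, dv \le C(T)\|\psi\|_{C^1}, \]
for any $\psi\in C^1_b(\R^3)$. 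Arzelà–Ascoli, a density argument in $\psi$, and the weak $L^1$-compactness then extract a subsequence converging to some $f^s\in L^\infty([0,\infty); L^1_{1+\gamma+\delta})$ with $f^{s,n}_t\rightharpoonup f^s_t$ in $L^1(\R^3)$ for every $t\ge 0$ and $t\mapsto f^s_t$ weakly continuous.

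The hard part is passing to the limit in the nonlinear collisional term. Setting $K_{s,n}(v,v_*):=|v-v_*|^{\gamma(s)}\int_{S^2} b_{s,n}(\cos\theta)(\psi'-\psi)\,d\sigma$, its absolute value is dominated by $C_0\|\psi\|_{C^1}|v-v_*|^{1+\gamma(s)}$ uniformly in $n$, and $K_{s,n}\to K_s$ pointwise by dominated convergence in the $\sigma$-integral. The separate weak $L^1$-convergence of $f^{s,n}_t$ is not by itself sufficient against the two-variable kernel; however, the entropy bound applied on $\R^6$ gives $H(f^{s,n}_t\otimes f^{s,n}_{t,*})\le 2H(f_0)$, which by Dunford--Pettis upgrades to joint weak $L^1$-convergence $f^{s,n}_t\otimes f^{s,n}_{t,*}\rightharpoonup f^s_t\otimes f^s_{t,*}$, while propagation of $\|\cdot\|_{L^1_{1+\gamma+\delta}}$ prevents escape of mass against the weight $|v-v_*|^{1+\gamma(s)}$. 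Passing to the limit then yields the weak formulation of \eqref{eq:HomogBE} for $f^s$. Energy conservation follows from uniform integrability of $|v|^2 f^{s,n}_t$ ensured by the slightly higher moment $\gamma+\delta>0$, and $H(f^s_t)\le H(f_0)$ from weak $L^1$-lower semicontinuity of $H$. The main obstacle throughout is precisely this upgrade from weak convergence of marginals to joint weak convergence, which is the essential use of the entropy assumption on the initial datum.
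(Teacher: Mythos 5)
The paper does not prove this lemma at all: it is stated as a recalled result with a pointer to Villani's work (\cite[Section 4]{Villani1998NewClassWeakSol} and \cite[Section 4.7, Theorem 9 (ii)]{Villani2002Review}). So there is no ``paper proof'' to match; you are reconstructing the argument behind the citation. Your reconstruction follows the standard Arkeryd--Villani cutoff-approximation scheme and is, in its overall architecture, the right one: truncate the angular singularity, solve with Lemma~\ref{lem:CutoffWellPosed}, extract uniform moment and entropy bounds, get weak $L^1$ equicompactness and weak time-equicontinuity, and pass to the limit in the bilinear collisional form. Two technical points you flag are handled correctly: the Taylor bound on the $\sigma$-integral only requires $\int_0^\pi\theta\,b_s\sin\theta\,d\theta<\infty$ (your $\theta^2$ variant is implied by the paper's $\theta$ bound), and the identification of the weak limit of $f^{s,n}_t\otimes f^{s,n}_t$ as $f^s_t\otimes f^s_t$ does follow from Dunford--Pettis on $\R^6$ plus testing against tensor products $\phi\otimes\psi$ (which separate points by Stone--Weierstrass together with the moment tightness).

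The one genuine soft spot is the energy-conservation step. You assert that uniform integrability of $|v|^2 f^{s,n}_t$ is ``ensured by the slightly higher moment $\gamma+\delta>0$,'' but the assumed control is in $L^1_{1+\gamma+\delta}$, i.e.\ a polynomial weight of order $1+\gamma+\delta$, which is \emph{not} larger than $2$ when $\delta$ is small (and $\gamma=\gamma(s)<1$). So the assumed weight does not by itself give uniform integrability of $|v|^2 f^{s,n}_t$; without it, Fatou only yields $\int|v|^2 f^s_t\,dv\le\int|v|^2 f_0\,dv$, i.e.\ energy can only drop in the limit. The correct fix is the standard moment-\emph{generation} property for hard potentials ($\gamma(s)>0$): Povzner-type estimates give $\sup_n\|f^{s,n}_t\|_{L^1_{2+\eta}}\le C(t_0,T)$ for any $0<t_0\le t\le T$ and some $\eta>0$, uniformly in the cutoff parameter (the constants depend only on mass, energy and the uniform bounds on $b_{s,n}$). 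This yields uniform integrability of $|v|^2 f^{s,n}_t$ for $t>0$, hence energy conservation for $t>0$, and continuity at $t=0$ (from the weak equicontinuity) closes the argument. You should replace the appeal to the $\gamma+\delta$ weight by this generation argument; as written the step does not go through for small $\delta$.
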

We finally have the following convergence result. 
\begin{theorem}\label{thm.solution}
	Let $ f_0\in L^1_p $ with finite entropy and arbitrary $ p>2 $. Consider a sequence of weak solutions $ f^s $ to \eqref{eq:HomogBE} as in Lemma \ref{lem:NonCutoffEx} with collision kernel $ B_s $, $ s>5 $. Then, $ f^s_t\rightharpoonup f^\infty_t $ weakly in $ L^1 $ for all $ t\geq0 $ as $ s\to \infty $, where $ f^\infty $ is the unique solution to \eqref{eq:HomogBE} for hard-sphere interactions.
\end{theorem}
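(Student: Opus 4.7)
The strategy is the standard compactness-uniqueness scheme: establish uniform-in-$s$ estimates strong enough to extract a weakly convergent subsequence, pass to the limit in the weak formulation of \eqref{eq:HomogBE}, and then invoke Lemma \ref{lem:CutoffWellPosed} to identify the limit. The decisive input from Sections \ref{sec.kernel}--\ref{sec.asymp} is the uniform angular moment bound \eqref{eq:NonCutoffKernelAssumpt} coming from Theorem \ref{thm:Kernel}(iii), together with the locally uniform limit $b_s\to 1/4$ of Theorem \ref{thm:Kernel}(i).

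First I would collect the uniform bounds. Conservation of mass and energy gives $\norm[L^1_2]{f^s_t}=\norm[L^1_2]{f_0}$ for all $t\geq 0$, while $H(f^s_t)\leq H(f_0)$. Testing the weak formulation with $\psi(v)=(1+|v|^2)^{p/2}$ and invoking a Povzner-type estimate, whose constants depend only on $\gamma\leq 1$ and on $c_0$ in \eqref{eq:NonCutoffKernelAssumpt}, yields a bound $\sup_{s>5}\sup_{t\in[0,T]}\norm[L^1_p]{f^s_t}\leq C_T$ for every $T>0$. Combined with the entropy bound, the Dunford--Pettis theorem gives uniform integrability of $\{f^s_t\}$ for each $t$, and hence relative weak compactness in $L^1(\R^3)$.

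Next I would obtain equicontinuity in $t$. For any $\psi\in C^1_b(\R^3)$, the weak form of \eqref{eq:HomogBE} together with the Taylor bound
\begin{equation*}
  \Bigl|\int_{S^2} b_s(\cos\theta)\,(\psi'-\psi)\,d\sigma\Bigr|\leq C_0\norm[C^1]{\psi}\,|v-v_*|
\end{equation*}
(which uses $\int_0^\pi \theta\, b_s(\cos\theta)\sin\theta\,d\theta\leq c_0$) gives
\begin{equation*}
  \Bigl|\tfrac{d}{dt}\skp{f^s_t}{\psi}\Bigr| \leq C\norm[C^1]{\psi}\,\norm[L^1_2]{f_0}^2,
\end{equation*}
uniformly in $s>5$. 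Together with Step~1 and a diagonal argument over a dense countable set of test functions in $C^1_b$, a subsequence of $f^s$ converges, $f^{s_k}_t\rightharpoonup g_t$ in $L^1(\R^3)$, uniformly for $t$ in compact sets.

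The main obstacle is passing to the limit in the collision term. In the weak formulation I would split the $\sigma$--integral into a grazing regime $\theta\leq\varepsilon$ and its complement. On $[\varepsilon,\pi]$, Theorem \ref{thm:Kernel}(i) gives $b_s\to 1/4$ uniformly and Theorem \ref{thm:Kernel}(iii) furnishes a uniform $L^1$-dominator, so the weak $L^1$ convergence of the bilinear expression $f^{s_k}_t(v)f^{s_k}_t(v_*)$ (classical, via tightness from Step~1 and the identity $|v-v_*|^{\gamma(s)}\to|v-v_*|$) lets me pass to the limit toward the hard-sphere operator. On the grazing layer, the Taylor bound combined with Theorem \ref{thm:Kernel}(iii) yields
\begin{equation*}
  \int_0^\varepsilon b_s(\cos\theta)\,|\psi'-\psi|\sin\theta\,d\theta
  \leq C\norm[C^1]{\psi}\,|v-v_*|\int_0^\varepsilon \theta^{-2/(s-1)}\,d\theta
  \leq C\varepsilon^{1-2/(s-1)},
\end{equation*}
which is uniformly small in $s>5$ as $\varepsilon\to 0$. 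Combining the two regimes shows that the limit $g$ is a weak solution of the hard-sphere equation.

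Finally, Step~1 shows that $g\in L^\infty([0,\infty);L^1_2)$ with energy conservation (inherited from $f^s$ and the strong convergence of $|v|^2$-moments, which follows from the uniform $L^1_p$ bound with $p>2$). Lemma \ref{lem:CutoffWellPosed} then identifies $g=f^\infty$. Since every weak-$L^1$ cluster point of $(f^s_t)$ equals $f^\infty_t$, the whole family converges, completing the proof.
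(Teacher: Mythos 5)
Your proposal follows essentially the same compactness--uniqueness scheme as the paper: Povzner-type uniform $L^1_p$ bounds combined with the entropy bound give Dunford--Pettis compactness, weak equicontinuity in $t$ (from the weak formulation and the uniform angular moment estimate) produces a convergent subsequence, Theorem~\ref{thm:Kernel}(i) and (iii) allow passing to the limit in the collision operator, and Lemma~\ref{lem:CutoffWellPosed} identifies the limit as the hard-sphere solution once energy conservation is checked via the $L^1_p$ bound with $p>2$. Your Step~3, splitting the angular integral into the grazing region $\theta\leq\varepsilon$ (controlled uniformly by $\varepsilon^{1-2/(s-1)}\leq\varepsilon^{1/2}$ thanks to Theorem~\ref{thm:Kernel}(iii)) and its complement (where Theorem~\ref{thm:Kernel}(i) gives locally uniform convergence of $b_s$), makes explicit what the paper states in a single line; two small remarks are that the Povzner constants also require a uniform positive \emph{lower} bound on the angular part, which Theorem~\ref{thm:Kernel}(i) supplies, and that the resulting moment bound \eqref{eq:UniformMomentBd} is in fact uniform over $t\in[0,\infty)$ rather than only on compact time intervals.
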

\begin{proof}[Proof of Theorem \ref{thm.solution}]
	First of all, applying a version of the Povzner estimate (see e.g. \cite[Lemma 2.2]{MischlerWennberg1999HomogBE} which is also applicable for non-cutoff kernels, cf. \cite[Appendix]{Villani2002Review}) we have
	\begin{align}\label{eq:UniformMomentBd}
		\sup_{t\in [0,\infty)} \norm[L^1_p]{f^s_t} \leq C(\norm[L^1_p]{f_0})=:C_p.
	\end{align}
	This estimate is independent of $s$ as long as $s $ is sufficiently large. Assume for example $ s>6 $. In fact, in the Povzner estimate we only need a uniform lower and upper bound on the angular part $ b_s(\cos\theta) $. This is ensured by Theorem \ref{thm:Kernel} items (i) and (iii). Also note that for, say, $ s>6 $ we have $ \gamma(s)\geq1/5 $. Furthermore, from the weak formulation we also obtain
	\begin{align*}
			\left| \int_{\R^3} \psi(v) f^s_{t_1}(v)\, dv - \int_{\R^3} \psi(v) f^s_{t_2}(v)\, dv \right| \leq C\norm[C^1]{\psi} |t_1-t_2|,
	\end{align*}
	for all $ t_1,\, t_2\geq0 $. Here, the constant $ C $ is independent of $ s>6 $ due to \eqref{eq:NonCutoffKernelAssumpt} and \eqref{eq:UniformMomentBd}. By the uniform entropy bound $$ H(f^s_t)\leq H(f_0), $$ and the previous weak equicontinuity property we can apply the Dunford-Pettis theorem yielding $$ f^{s_n}_t\rightharpoonup f^\infty_t,$$ weakly in $ L^1 $ for all $ t\geq0 $ for a subsequence $ s_n\to \infty $. 
 
	 Using Theorem \ref{thm:Kernel}, items (i) and (iii), we can pass to the limit in the weak formulation. Hence, $ f^\infty $ is a weak solution to \eqref{eq:HomogBE} for hard-sphere interactions. Since there is no angular singularity, one can infer $$ f^\infty\in C([0,\infty),L^1_2) .$$ By the uniform moment bound \eqref{eq:UniformMomentBd}, the second moments also converge for all $ t\geq0 $ as $ s_n\to \infty $. As a consequence $ f^\infty $ preserves energy and thus $ f^\infty $ is the unique solution in Lemma \ref{lem:CutoffWellPosed}. This implies that the whole sequence converges $ f^s_t\rightharpoonup f^\infty_t  $ as $ s\to \infty $.
\end{proof}

\subsection{Conclusion} We proved the convergence of the collision kernel for inverse power law interactions $ 1/r^{s-1} $ to the hard-sphere kernel as $ s\to \infty $. We furthermore studied the asymptotics of the angular singularity $ \theta\to 0 $. Finally, solutions to the homogeneous Boltzmann equation converge respectively.

\section*{Acknowledgement} The authors gratefully acknowledge the support by the Deutsche Forschungsgemeinschaft (DFG, German Research Foundation) through the collaborative research centre The mathematics of emerging effects (CRC 1060, Project-ID 211504053). J. W. Jang is supported by the National Research Foundation of Korea (NRF) grant funded by the Korean government (MSIT) NRF-2022R1G1A1009044 and by the Basic Science Research Institute Fund of Korea NRF-2021R1A6A1A10042944. B. Kepka is funded by the Bonn International Graduate School of Mathematics at the Hausdorff Center for Mathematics (EXC 2047/1, Project-ID 390685813). J. J. L. Vel\'azquez is also funded by DFG under Germany's Excellence Strategy-EXC-2047/1-390685813.

	\bibliographystyle{hplain}
	\bibliography{main}
\end{document}